\documentclass[11pt]{amsart}

\usepackage{lmodern}
\usepackage{microtype}
\usepackage[english]{babel}
\usepackage{mathtools}
\usepackage{hyperref}
\usepackage[msc-links]{amsrefs}

\theoremstyle{plain} 
\newtheorem*{theorem}{Theorem}
\newtheorem*{lemma}{Lemma}
\newtheorem*{corollary}{Corollary}

\begin{document} 
\title[The Riesz projection is bounded on $L^p(\mathbb{T})$ for $1<p<\infty$]{A simple proof that the Riesz projection is bounded on $L^p(\mathbb{T})$ for $1<p<\infty$}

\begin{abstract}
	Let $\mathbf{P}$ denote the Riesz projection on the unit circle $\mathbb{T}$ and suppose that $1<p<\infty$. We present a simple proof of the bound $\|\mathbf{P}f\|_p \leq \max(p,q) \|f\|_p$, where $f$ is in $L^p(\mathbb{T})$ and $p^{-1}+q^{-1}=1$. Our proof is a variation of a classical argument due to M.~Riesz demonstrating that the Hilbert transform is bounded on $L^p(\mathbb{T})$.
\end{abstract}

\date{\today}

\subjclass{Primary 42A05. Secondary 42A50}
\keywords{Riesz projection, Hilbert transform}

\thanks{Research supported by Grant 354537 of the Research Council of Norway.}

\author{Ole Fredrik Brevig} 
\address{Department of Mathematical Sciences, Norwegian University of Science and Technology (NTNU), 7491 Trondheim, Norway} 
\email{ole.brevig@ntnu.no}

\maketitle

This note contains a simple proof of an important result in Fourier analysis. A well-known consequence of Fej\'{e}r's theorem is that the set of trigonometric polynomials is dense in $L^p(\mathbb{T})$ for $1 \leq p<\infty$, where $\mathbb{T}$ is the unit circle endowed with the normalized Lebesgue arc length measure. This implies that the Riesz projection $\mathbf{P}$ can be densely defined on $L^p(\mathbb{T})$ by 
\[\mathbf{P}\left(\sum_{n \in \mathbb{Z}} \widehat{f}(n)\, e^{in\theta}\right) = \sum_{n=0}^\infty \widehat{f}(n) \,e^{in\theta}.\]
The Hilbert transform $\mathbf{H}$ is the quintessential example of a singular integral operator, and it can be related to the Riesz projection via the formula
\begin{equation} \label{eq:conjfunc}
	\mathbf{H}f = -i\left(2\mathbf{P}f - f - \widehat{f}(0)\right).
\end{equation}
A celebrated result due to M.~Riesz~\cite{Riesz1928} is that $\mathbf{H}$ defines a bounded linear operator on $L^p(\mathbb{T})$ for $1<p<\infty$. We refer to Littlewood~\cite{Littlewood1986}*{pp.~194--195} and G{\aa}rding~\cite{Garding1970}*{pp.~III--IV} for historical context and to Grafakos~\cite{Grafakos2014}*{pp.~247--248} for a modern presentation of the original proof.

The crux of the present note lies in the observation that this proof can be made significantly shorter and more elegant by shifting the focus from $\mathbf{H}$ to $\mathbf{P}$, while following the exact same blueprint: establish the result first for even integers $p$, then extend to the general case by interpolation and duality. 

\begin{lemma}
	If $k=1,2,3,\ldots$ and if $f$ is a trigonometric polynomial, then 
	\[\|\mathbf{P}f\|_{2k} \leq k \|f\|_{2k}.\]
\end{lemma}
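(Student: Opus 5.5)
The plan is to follow Riesz's even-exponent argument, applied to $\mathbf{P}$ directly. Write $g=\mathbf{P}f$ and $h=f-g$. Then $g$ is an analytic trigonometric polynomial, and $h$ has only negative frequencies, so $\overline{h}$ is analytic with $\widehat{\overline h}(0)=0$.

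\textbf{Step 1: an analytic mean-zero quantity.} The product $b=g\,\overline{h}$ is analytic with vanishing mean, since $\overline h$ has no constant term. The same holds for every power $b^j$ with $j\geq1$, so $\int_{\mathbb{T}} b^j=0$. I will use the pointwise identity
\[|g|^2 = g\,\overline{g} = g\,\overline{f} - g\,\overline{h} = a - b, \qquad a = g\,\overline{f},\]
which ties the real quantity $|g|^2$ to $a$, whose modulus is $|a| = |g||f|$.

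\textbf{Step 2: factorization.} Apply $x^k-y^k=(x-y)\sum_{j=0}^{k-1}x^{k-1-j}y^j$ with $x=|g|^2$ and $y=-b$, so that $x-y=a$. Integrating over $\mathbb{T}$ and using $\int (-b)^k=0$ gives
\[\|g\|_{2k}^{2k} = \int_{\mathbb{T}} g\,\overline{f}\,\sum_{j=0}^{k-1}|g|^{2(k-1-j)}\,(-g\,\overline{h})^{j}.\]
Each summand is bounded pointwise by $|f|\,|g|^{2k-1-j}|h|^j$, and Hölder's inequality with exponents adding up to $2k$ then gives
\[\|g\|_{2k}^{2k}\le \sum_{j=0}^{k-1}\|f\|_{2k}\,\|g\|_{2k}^{2k-1-j}\,\|h\|_{2k}^{j}.\]

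\textbf{Step 3: solving the polynomial inequality.} Normalize $\|f\|_{2k}=1$, set $x=\|g\|_{2k}$, and estimate $\|h\|_{2k}$ in terms of $x$. Step 3 is the main obstacle: getting exactly the constant $k$. The crude bound $\|h\|_{2k}\le 1+x$ turns the inequality into $((1+x)/x)^k\ge 2$, that is $x\le (2^{1/k}-1)^{-1}$. This is only about $k/\log 2$, which is off by a factor of roughly $1.44$.

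So the pairing has to be arranged so that no lossy triangle inequality for $h$ is needed. My first attempt will be to choose the analytic mean-zero quantity so that only $f$ and $g$ appear. One candidate is to write $|g|^2$ as a real part and exploit $\operatorname{Im} a=\operatorname{Im} b$. Another is to run the factorization with $x=|g|^2$ against $y$ built from $g\overline{f}$ and $|g|^2$ alone. The aim is an inequality of the form $x^{2k}\le\sum_{j\ge1}\binom{k}{j}c_j x^{2k-j}$, whose largest root can be checked to be at most $k$, for instance by comparing with $(x+1)^k$ or by evaluating the polynomial at $x=k$ and using monotonicity.

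If such an arrangement works, Hölder's inequality together with this elementary root estimate gives $\|\mathbf{P}f\|_{2k}\le k\|f\|_{2k}$, as asserted in the Lemma.
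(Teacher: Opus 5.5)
Your Steps 1 and 2 are correct. After dividing by $\|g\|_{2k}^{k}$, the inequality you reach is precisely the one in the paper:
\[\|g\|_{2k}^{k}\le\|f\|_{2k}\sum_{j=0}^{k-1}\|g\|_{2k}^{k-1-j}\|h\|_{2k}^{j}.\]
The paper obtains it differently, from $(\mathbf{P}f)^k\perp(-\mathbf{P}_\perp f)^k$ in $L^2(\mathbb{T})$ and the factorization with $x=\mathbf{P}f$, $y=-\mathbf{P}_\perp f$, but both routes lead to this inequality. The genuine gap is Step 3. As you compute yourself, bounding $\|h\|_{2k}\le\|f\|_{2k}+\|g\|_{2k}$ only gives $(2^{1/k}-1)^{-1}\approx k/\log 2$. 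The alternatives you list are not carried out. The second one cannot help: with $x=|g|^2$ and $x-y=g\overline{f}$, a $y$ built from $g\overline{f}$ and $|g|^2$ is just $y=|g|^2-g\overline{f}=-b$ again, and the pointwise bound $|y|\le|g|(|g|+|f|)$ reproduces the same loss. So as written, the proposal proves the lemma only with constant $(2^{1/k}-1)^{-1}$, not $k$.

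The missing idea is a case split using the symmetry between $\mathbf{P}$ and $\mathbf{P}_\perp f=f-\mathbf{P}f=h$. With it, $\|h\|_{2k}$ never has to be estimated in terms of $\|f\|_{2k}$.

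\emph{Case $\|h\|_{2k}\le\|g\|_{2k}$.} Each term of your sum is at most $\|f\|_{2k}\|g\|_{2k}^{2k-1}$. Hence $\|g\|_{2k}^{2k}\le k\|f\|_{2k}\|g\|_{2k}^{2k-1}$, so $\|g\|_{2k}\le k\|f\|_{2k}$.

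\emph{Case $\|h\|_{2k}\ge\|g\|_{2k}$.} Rerun Steps 1 and 2 with $g$ and $h$ interchanged. Write $|h|^2=h\overline{f}-h\overline{g}$; since $h\overline{g}$ has only negative frequencies, every positive power of it has mean zero. The same factorization and H\"older's inequality give $\|h\|_{2k}^{2k}\le\|f\|_{2k}\sum_{j=0}^{k-1}\|h\|_{2k}^{2k-1-j}\|g\|_{2k}^{j}\le k\|f\|_{2k}\|h\|_{2k}^{2k-1}$. Hence $\|g\|_{2k}\le\|h\|_{2k}\le k\|f\|_{2k}$.

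This is exactly how the paper finishes, and it is why working with $\mathbf{P}$ instead of $\mathbf{H}$ pays off.
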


\begin{proof}
	Let $\mathbf{P}_\perp f = f-\mathbf{P}f$ and assume that $\|\mathbf{P}f\|_{2k} \geq \|\mathbf{P}_\perp f\|_{2k}$. Since $f$ is a trigonometric polynomial and $k$ is a positive integer, it is plain that
	\begin{equation} \label{eq:mstrick}
		(\mathbf{P}f)^k \perp (-\mathbf{P}_\perp f)^k
	\end{equation}
	in $L^2(\mathbb{T})$. Since $\|\mathbf{P}f\|_{2k}^k = \|(\mathbf{P}f)^k\|_2$, this can be parlayed into the estimate
	\begin{equation} \label{eq:step1}
		\left\|\mathbf{P}f\right\|_{2k}^k \leq \left\|(\mathbf{P}f)^k - (-\mathbf{P}_\perp f)^k\right\|_2 = \left\| f \sum_{j=0}^{k-1} (-1)^j (\mathbf{P} f)^{k-1-j}(\mathbf{P}_\perp f)^j 
\right\|_2.
	\end{equation}
	Using H\"older's inequality and the triangle inequality, we infer from \eqref{eq:step1} that
	\begin{equation} \label{eq:step2}
		\left\|\mathbf{P}f\right\|_{2k}^k \leq \left\|f\right\|_{2k} \sum_{j=0}^{k-1} \left\|(\mathbf{P}f)^{k-1-j} (\mathbf{P}_\perp f)^j \right\|_{\frac{2k}{k-1}}.
	\end{equation}
	More applications of H\"older's inequality demonstrate that the terms in this sum are each bounded by $\|\mathbf{P}f\|_{2k}^{k-1-j} \|\mathbf{P}_\perp f\|_{2k}^j \leq \left\|\mathbf{P}f\right\|_{2k}^{k-1}$. Hence 
	\[\left\|\mathbf{P}f\right\|_{2k}^k \leq k \left\|f\right\|_{2k} \left\|\mathbf{P}f\right\|_{2k}^{k-1},\]
	which implies the stated bound. If $\|\mathbf{P}f\|_{2k} \leq \|\mathbf{P}_\perp f\|_{2k}$, then the same argument yields that $\|\mathbf{P}f\|_{2k} \leq \|\mathbf{P}_\perp f\|_{2k} \leq k \|f\|_{2k}$. 
\end{proof}

The case $k=2$ of the main trick \eqref{eq:mstrick} in the above proof is from Marzo and Seip \cite{MS2011}*{Theorem~1}. The algebraic identity used in \eqref{eq:step1} plays a similar role in the proof of a result due to Forelli~\cite{Forelli1963}*{Lemma~4}, while our use of H\"older's inequality to handle the terms of the sum in \eqref{eq:step2} follows M.~Riesz~\cite{Riesz1928}. 

In what follows $q$ will be the conjugate exponent of $p$, so that $p^{-1}+q^{-1}=1$.
 
\begin{theorem}
	If $1<p<\infty$, then $\mathbf{P}$ extends to a bounded linear operator on $L^p(\mathbb{T})$ satisfying
	\[\left\|\mathbf{P}f\right\|_p \leq \max(p,q) \left\|f\right\|_p.\]
\end{theorem}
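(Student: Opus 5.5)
We combine the Lemma with Riesz--Thorin interpolation and duality, following the blueprint described above. The Lemma gives $\|\mathbf{P}f\|_{2k} \leq k\|f\|_{2k}$ on trigonometric polynomials. Since these are dense in $L^{2k}(\mathbb{T})$, $\mathbf{P}$ extends to a bounded operator on $L^{2k}$ with norm at most $k$. This holds for every $k$, and for $k=1$ we have the trivial bound $\|\mathbf{P}\|_{2\to 2}=1$.

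\textbf{Case $2\le p<\infty$.} Choose the integer $k$ with $2k \leq p \leq 2k+2$. Interpolate between the endpoints $2k$ and $2k+2$ with Riesz--Thorin; since $\mathbf{P}$ is linear, the complex version applies. Writing $\frac1p=\frac{1-\theta}{2k}+\frac{\theta}{2k+2}$, we get
\[\|\mathbf{P}\|_{p\to p}\le k^{1-\theta}(k+1)^{\theta}.\]
We must check that this is at most $p$. By the weighted AM--GM inequality, $k^{1-\theta}(k+1)^\theta \le (1-\theta)k+\theta(k+1)=k+\theta$. Solving the interpolation relation gives
\[\theta=\frac{(k+1)(p-2k)}{p},\]
so $k+\theta\le p$ is equivalent to $kp+(k+1)(p-2k)\le p^2$. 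Set $p=2k+t$ with $t\in[0,2]$. The inequality becomes $k(2k+t)+(k+1)t\le (2k+t)^2$, that is, $2k^2+(2k+1)t\le 4k^2+4kt+t^2$. This holds because $2k^2\le 4k^2$ and $(2k+1)t\le 4kt$ for $k\ge1$. Hence $\|\mathbf{P}f\|_p\le p\|f\|_p = \max(p,q)\|f\|_p$ when $p\ge2$.

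Interpolating directly between $2$ and $2k+2$ would give a worse power of $k$. I expect the main obstacle to be this bookkeeping: getting exactly the constant $p$ rather than something like $2p$. If the AM--GM step turns out too lossy near $k=1$, I would instead use the cruder bound $k^{1-\theta}(k+1)^\theta\le k+1\le p$, which is valid only when $p\ge k+1$. This holds for $k\ge1$ since $p\ge 2k\ge k+1$, so it gives the bound $p$ even more directly.

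\textbf{Case $1<p<2$.} We argue by duality. For trigonometric polynomials $f,g$, Parseval gives $\langle \mathbf{P}f,g\rangle=\langle f,\mathbf{P}g\rangle$, since $\mathbf{P}$ is an orthogonal projection on $L^2$. Therefore
\[\|\mathbf{P}f\|_p=\sup_{\|g\|_q\le1}|\langle f,\mathbf{P}g\rangle|\le \|f\|_p\,\|\mathbf{P}\|_{q\to q}\le q\|f\|_p,\]
where the supremum may be restricted to trigonometric polynomials $g$ by density, and $q>2$ is covered by the first case. Since $q=\max(p,q)$ here, this completes the estimate. Finally, density of trigonometric polynomials in $L^p$ lets us extend $\mathbf{P}$ to all of $L^p(\mathbb{T})$ with the same bound.
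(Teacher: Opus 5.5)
Your proposal is correct and follows the paper's proof: the Lemma on $L^{2k}$, Riesz--Thorin between the consecutive even exponents $2k\le p\le 2k+2$, and duality via $\langle \mathbf{P}f,g\rangle=\langle f,\mathbf{P}g\rangle$ for $1<p<2$. Your AM--GM computation is valid but unnecessary, since the paper simply uses your fallback bound $k^{1-\theta}(k+1)^\theta\le k+1\le 2k\le p$.
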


\begin{proof}
	If $k=1,2,3,\ldots$, then the lemma shows that $\mathbf{P}$ extends by density and continuity to a bounded linear operator on $L^{2k}(\mathbb{T})$ with norm at most $k$. If $2k \leq p \leq 2(k+1)$, then we use the Riesz-Thorin interpolation theorem to infer from this that $\mathbf{P}$ extends to a bounded linear operator on $L^p(\mathbb{T})$ with norm at most $k^{1-\theta} (k+1)^\theta$ for some $0 \leq \theta \leq 1$ that depends only on $p$. Since 
	\[k \leq k+1 \leq 2k \leq p,\]
	we obtain the stated estimate for the case $2 \leq p<\infty$. Since $\langle \mathbf{P}f,g\rangle = \langle f, \mathbf{P}g \rangle$ for every pair of trigonometric polynomials $f$ and $g$, the case $1<p<2$ can be established via duality.
\end{proof}

We have made no effort to optimize the constant $\max(p,q)$ in the above bound. It is possible to extract $C\max(p,q)$ for $C=1/(e\log{2})=0.5307\ldots$ from the same argument by a small calculus computation. The best constant is $1/\sin(\pi/p)$ by a result due to Hollenbeck and Verbitsky~\cite{HV2000}.

The following result is immediate from \eqref{eq:conjfunc} and the theorem.
\begin{corollary}[M.~Riesz~\cite{Riesz1928}]
	If $1<p<\infty$, then $\mathbf{H}$ extends to a bounded linear operator on $L^p(\mathbb{T})$ satisfying
	\[\|\mathbf{H}f\|_p \leq 2\left(\max(p,q)+1\right) \|f\|_p.\]
\end{corollary}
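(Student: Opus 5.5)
The corollary follows from the formula \eqref{eq:conjfunc}, combined with the theorem and the triangle inequality. I would first work with a trigonometric polynomial $f$ and afterwards pass to all of $L^p(\mathbb{T})$ by density. The formula gives
\[\|\mathbf{H}f\|_p = \bigl\|2\mathbf{P}f - f - \widehat{f}(0)\bigr\|_p \leq 2\|\mathbf{P}f\|_p + \|f\|_p + |\widehat{f}(0)|.\]

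Each term is bounded separately. The theorem gives $\|\mathbf{P}f\|_p \leq \max(p,q)\|f\|_p$. Because the measure on $\mathbb{T}$ is normalized, the constant function $\widehat{f}(0)$ has $L^p$ norm $|\widehat{f}(0)|$. This is at most $\|f\|_1$, which is at most $\|f\|_p$ by Hölder's inequality (or Jensen's inequality). Summing the three bounds gives
\[\|\mathbf{H}f\|_p \leq \bigl(2\max(p,q)+2\bigr)\|f\|_p = 2\bigl(\max(p,q)+1\bigr)\|f\|_p.\]

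Finally, trigonometric polynomials are dense in $L^p(\mathbb{T})$ for $1<p<\infty$ by Fejér's theorem. Since $\mathbf{H}$ is linear and bounded on this dense subspace, it extends uniquely by continuity to a bounded operator on $L^p(\mathbb{T})$ with the same norm bound.

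None of these steps is a real obstacle. The only point needing care is that the extension of $\mathbf{P}$ from the theorem agrees with $\mathbf{P}$ on trigonometric polynomials. This holds by construction, so the identity \eqref{eq:conjfunc} can be used there without change.
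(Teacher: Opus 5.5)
Your proof is correct and is exactly the argument the paper has in mind when it calls the corollary immediate from \eqref{eq:conjfunc} and the theorem. You apply the triangle inequality, use $\|\mathbf{P}f\|_p \leq \max(p,q)\|f\|_p$ and $|\widehat{f}(0)| \leq \|f\|_1 \leq \|f\|_p$ on trigonometric polynomials, and then extend by density.
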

The best constant in this bound is due to Pichorides~\cite{Pichorides1972}. 
\bibliography{rieszprojection}

\end{document}